\newtheorem{theorem}{Theorem}[section]
\newtheorem{proposition}[theorem]{Proposition}
\newtheorem{lemma}[theorem]{Lemma}
\theoremstyle{definition}
\newtheorem{remark}[theorem]{Remark}
\newtheorem{assumption}[theorem]{Assumption}
\newcommand{\R}{\mathbb{R}}
\newcommand{\N}{\mathbb{N}}
\newcommand{\id}{\mathrm{id}}
\newcommand{\diag}{\mathrm{diag}}
\newcommand{\diff}{\mathrm{d}}
\newcommand{\umax}{u_{\max}}
\newcommand{\Tr}{\mathrm{Tr}}
\newcommand{\E}{\mathbb{E}}
\newcommand{\Var}{\mathrm{Var}}
\newcommand{\eps}{\varepsilon}
\newcommand{\Jac}{\mathrm{Jac}}
\newcommand{\vin}{v_{\ket{1}}}
\newcommand{\omax}{\overline{\omega}}
\newcommand{\omin}{\underline{\omega}}
\newcommand{\kmax}{\overline{\kappa}}
\newcommand{\kmin}{\underline{\kappa}}
\newcommand{\gmax}{\overline{\gamma}}
\newcommand{\gmin}{\underline{\gamma}}
\title{Open qubit parameter identification with bounded pulses}
\author[1]{G. Aloui} 
\author[1]{I. Beschastnyi} 
\author[1]{L. Sacchelli}
\affil[1]{\small Inria, Université Côte d’Azur, CNRS, LJAD, France. 

\footnotesize (email: \texttt{ivan.beschastnyi,ludovic.sacchelli@inria.fr})}
\date{\today}
\begin{document}

\maketitle

\begin{abstract}               
We address the problem of parameter identification for a single open qubit subjected to relaxation and dephasing. Our approach is based on selecting a minimal set of carefully chosen qubit configurations that can be reliably prepared and measured in order to provide an interpretable methodology of parameter identification while potentially minimizing experimental overhead.
The protocol relies on saturating control pulses to generate these configurations. In an idealized regime of infinite-amplitude pulses, we demonstrate that the parameters can be reconstructed analytically from the measured observables. 
We then consider large but finite pulses as a perturbation of this ideal regime and provide bounds on the estimation error introduced by the practical implementation. This framework allows us to separate the sources of uncertainty in the estimation procedure, distinguishing between statistical fluctuations arising from repeated measurements and modeling errors due to deviations from the ideal pulse regime.
\end{abstract}

\section{Introduction}

{Quantum information technologies are emerging as one of the central scientific and technological challenges of the 21st century. A major obstacle to scalable architectures is decoherence, which directly manifests as errors in qubits and limits the reliability of quantum operations \cite{Preskill2018quantumcomputingin}. Addressing this issue requires both error–correction strategies on the algorithmic side and accurate modeling and mitigation of environmental effects on the engineering side \cite{Siddiqi}.
Because decoherence directly translates into operational errors, control strategies that seek to counteract its effects require an accurate description of the underlying system dynamics. 

Consequently, estimating the parameters of controlled quantum Hamiltonians has become a key practical problem in quantum engineering. A variety of approaches, with different levels of generality, have been proposed. Some methods rely on extensive measurement data: for instance, the parameters can be estimated by minimizing a mean-square discrepancy between theoretical and measured expectation values of selected observables, a strategy underlying widely used protocols such as Ramsey interferometry~\cite{ramsey}. Although versatile, such techniques often require a large number of measurements, which increases the experimental and economic cost of calibration. Other approaches focus on constructing estimators that saturate the Quantum Cramér–Rao bound~\cite{geometric_estimation,hamiltonian_identification}. More recently, machine-learning-based methods have also been explored for this task~\cite{quantum_machine}.}

{
In the present work, we carry out a detailed case study on the identification of the parameters of a single qubit subjected to environmental interactions. The system is described by a standard {two-level system subject to relaxation and dephasing.}
Our approach has two complementary objectives. First, we investigate the design of an estimation protocol that relies on a minimal number of experimental observables, to be estimated from repeated projective measurements, while keeping the experimental burden as low as possible. The parameters are inferred from empirical frequencies, and we optimize the free design parameters of the protocol so as to minimize the statistical variance of the resulting estimators. Instead of relying on curve-fitting techniques, we exploit an explicit inversion strategy. While individual parameters, such as decoherence times~\cite{lindblad} or the Larmor frequency~\cite{larmor}, can be estimated by classical techniques, a unified treatment allowing the simultaneous reconstruction of all parameters in a noisy qubit model appears to be missing in the literature. One goal of this work is to fill this gap.

Second, we analyze the role of pulse-based control in enabling parameter identification. Pulses are omnipresent in quantum control, and techniques such as Ramsey interferometry exploit their capacity to isolate specific dynamical effects. In idealized models with unbounded controls, pulses act as instantaneous state rotations, much like discrete gates in quantum computing. While this approximation is structurally valid for pure-state control \cite{Lokutsievskiy_2024}, its quantitative impact on estimation procedures remains less understood. To address this, we design our protocol in an ideal regime of infinitely strong pulses, where the sequence is transparent and analytically tractable, and then treat realistic, finite-amplitude pulses as a perturbation. This allows us to separate the total uncertainty into a statistical component, arising from the measurement noise under the idealized model, and a modeling component due to imperfections of the pulses.

The paper is organized as follows. In Section~\ref{S:setting}, we present the qubit model, the estimation protocol, and the theoretical guarantees we obtain. Section~\ref{S:ideal} develops the protocol in the ideal-pulse regime. Section~\ref{S:bound} quantifies the discrepancy between ideal and finite pulses. Section~\ref{S:local} discusses a refinement based on local disambiguation, and Section~\ref{S:simulations} concludes with numerical simulations.
}

\section{Setting}\label{S:setting}
\subsection{Open qubit dynamics}
The system under study is a two-level open quantum system interacting with its environment (open qubit).
Denoting by $X,Y,Z$ the Pauli matrices, we assume that in the absence of environmental coupling the qubit undergoes coherent dynamics generated by the controlled Hamiltonian
\begin{equation}
H_u=\frac{\omega}{2}Z+\frac{u\kappa}{2}X,
\end{equation} 
where $\omega$ is the intrinsic frequency of the system and $\kappa$ quantifies the strength of the control field. The control field is assumed to take values in a known admissible range $u\in [-\umax,\umax]$ that we assume to be large in comparison to other model parameters.

Environmental interactions are modeled by a relaxation of the qubit towards ground state, of rate $\gamma_1$, and dephasing, of rate $\gamma_2$.

We denote by $\sigma^\pm=(X\pm i Y)/2$ the raising and lowering operators, respectively. 
Then the density matrix $\rho$ of the open qubit evolves according the master equation (see, e.g., \cite{BreuerPetruccione})
\begin{equation}
\dot \rho = -i [H_u,\rho]
+\gamma_1(\sigma^+\rho \sigma^- -\frac{1}{2}\{\sigma^-\sigma^+,\rho\})
+
\gamma_2(Z\rho Z -\rho)
\end{equation}
Adopting the Bloch's ball formalism, we denote by $v=(x,y,z)\in \R^3$ the vector such that $x^2+y^2+z^2\leq 1$ and $\rho=\frac{1}{2}(\id+xX+yY+zZ)$. Under these assumptions, we can rewrite the evolution of $\rho$ as $\dot v= A_u v +b$, where 
$b^\top =\gamma_1\begin{pmatrix}
0&0&1
\end{pmatrix}^\top\in \R^3$ and  $A_u\in \R^{3\times 3}$ can be decomposed as 
\begin{equation}
A_u= \omega\Omega_z+ \kappa u \Omega_x   +\gamma_1 \Gamma_1+\gamma_2 \Gamma_2
\end{equation}
where $\Gamma_1=\diag(-1/2,-1/2,-1)$, $\Gamma_2=\diag(-2,-2,0)$,
\begin{equation}
\Omega_z=
\begin{pmatrix}
    0&-1&0\\1&0&0\\0&0&0
\end{pmatrix},
\qquad
\Omega_x=
\begin{pmatrix}
    0&0&0\\0&0&-1\\0&1&0
\end{pmatrix}.
\end{equation}
In full, we have the dynamics
\begin{equation}\label{E:dynamics}
%\frac{\diff}{\diff t}
\begin{pmatrix}
    \dot x\\ \dot y\\ \dot z
\end{pmatrix}
=
\begin{pmatrix}
    -\tfrac{\gamma_1}{2}-2\gamma_2&-\omega&0
    \\
    \omega& -\tfrac{\gamma_1}{2}-2\gamma_2 & -\kappa u
    \\
    0 &\kappa u&-\gamma_1
\end{pmatrix}
\begin{pmatrix}
    x\\y\\z
\end{pmatrix}
+
\begin{pmatrix}
    0 \\ 0 \\ \gamma_1
\end{pmatrix}.
\end{equation}
See Figure~\ref{F:flows} for a decomposed representation of the action of each parameter.
\begin{remark}
One can check that the unique equilibrium at rest ($u=0$) within the unit ball is at the north pole $(0,0,1)$. 
\end{remark}

\begin{figure}
\centering

\begin{minipage}[b]{.3\linewidth}
\centering
\hspace{-3mm}$\omega$

\smallskip
\includegraphics[width=\linewidth]{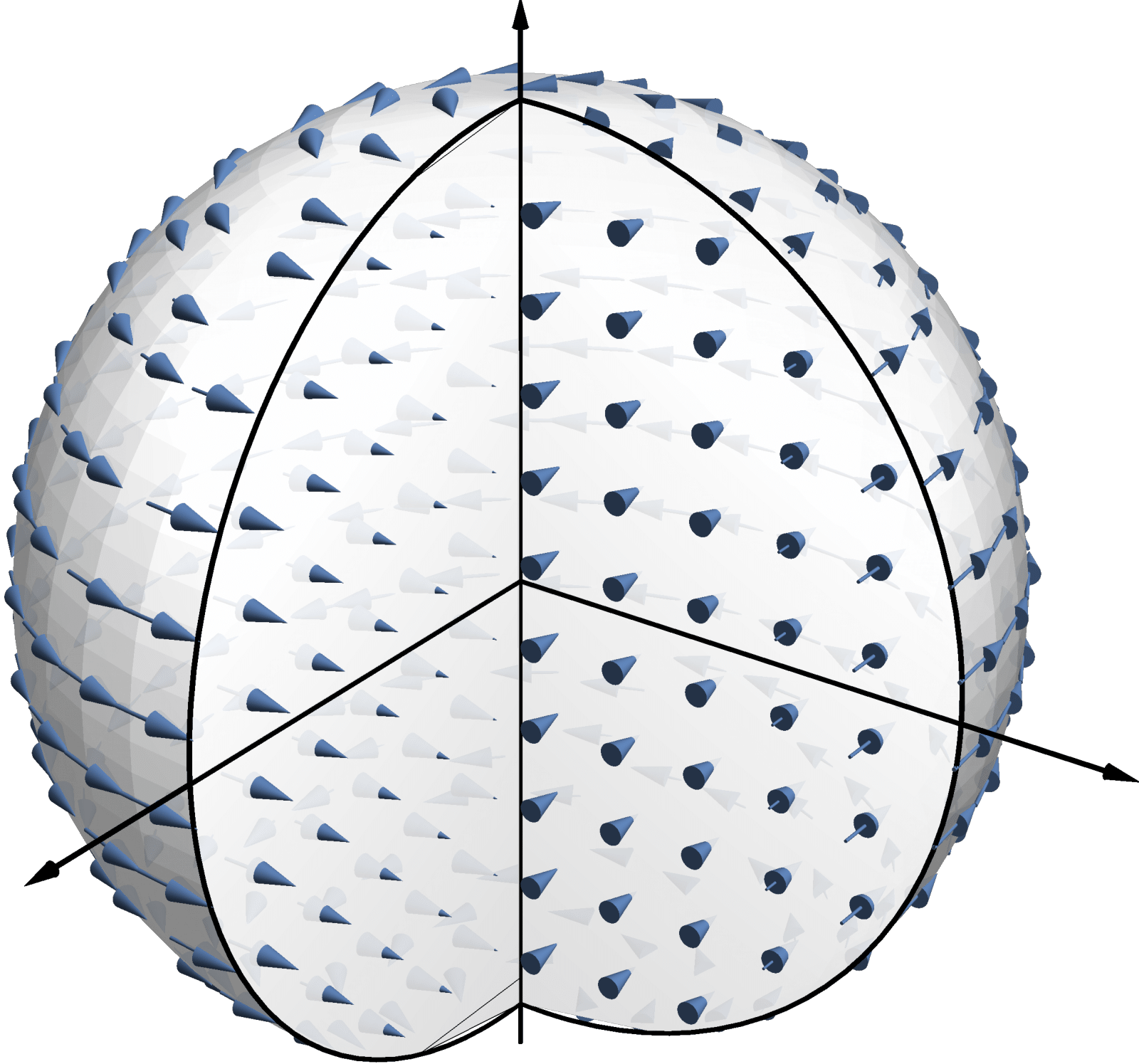} 
\end{minipage}
\hspace{.05\linewidth}
\begin{minipage}[b]{.3\linewidth}
\centering
\hspace{-3mm}$\kappa$

\smallskip
\includegraphics[width=\linewidth]{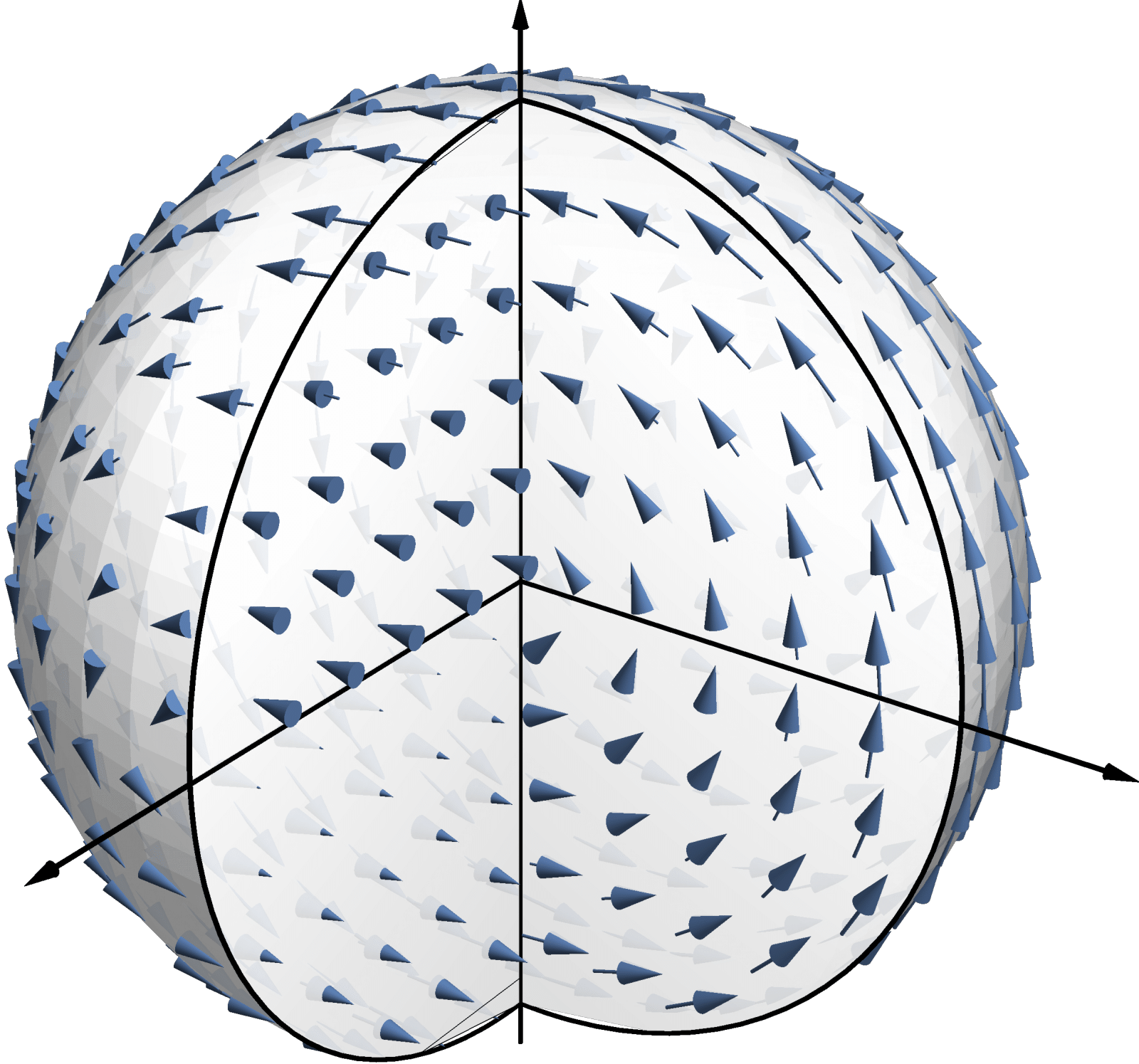} 
\end{minipage}

\bigskip

\begin{minipage}[t]{.31\linewidth}
\centering
\hspace{-3mm}$\gamma_1$

\smallskip
\includegraphics[width=\linewidth]{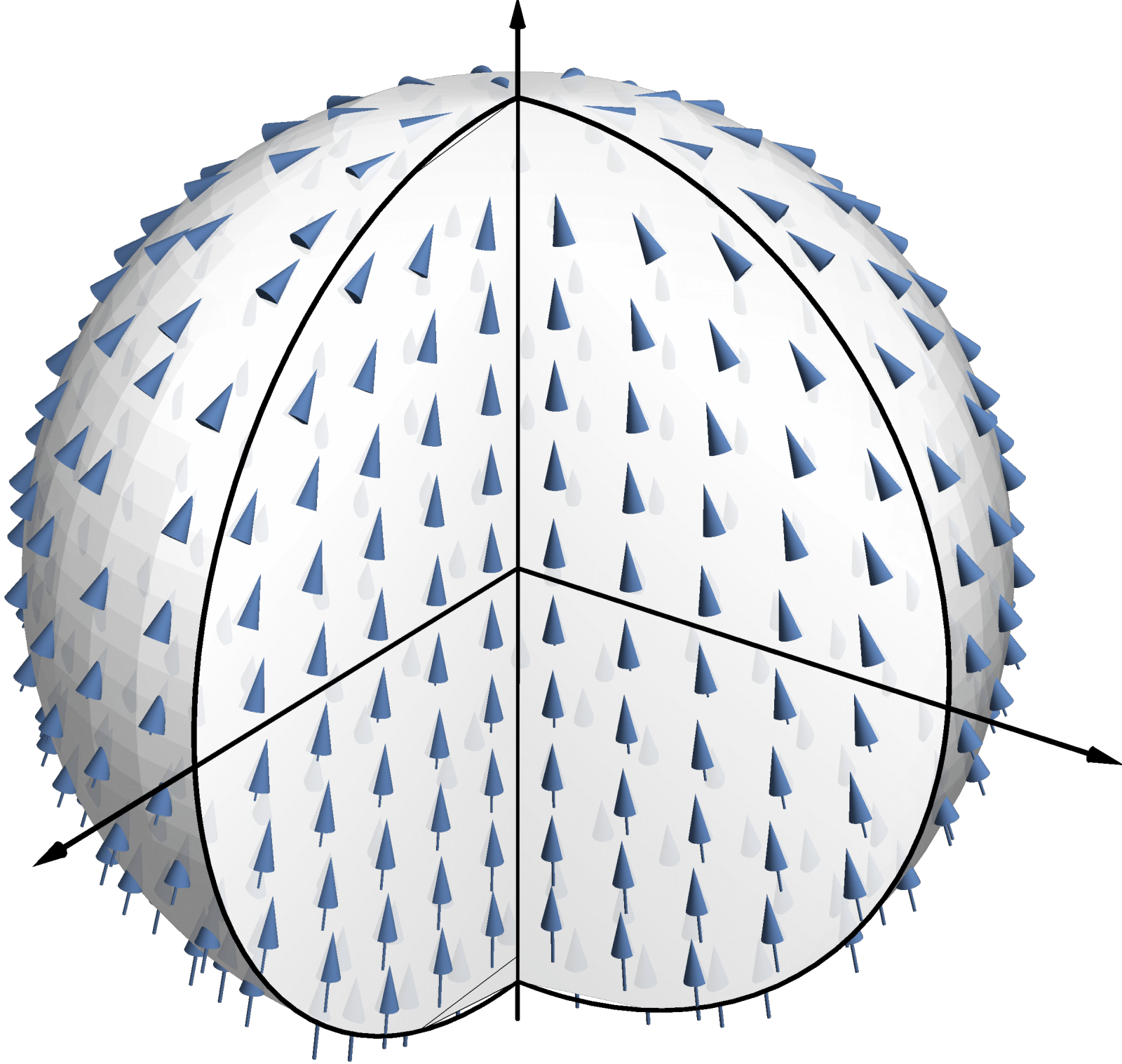} 
\end{minipage}
\hspace{.05\linewidth}
\begin{minipage}[t]{.301\linewidth}
\centering
\hspace{-3mm}$\gamma_2$

\smallskip
\includegraphics[width=\linewidth]{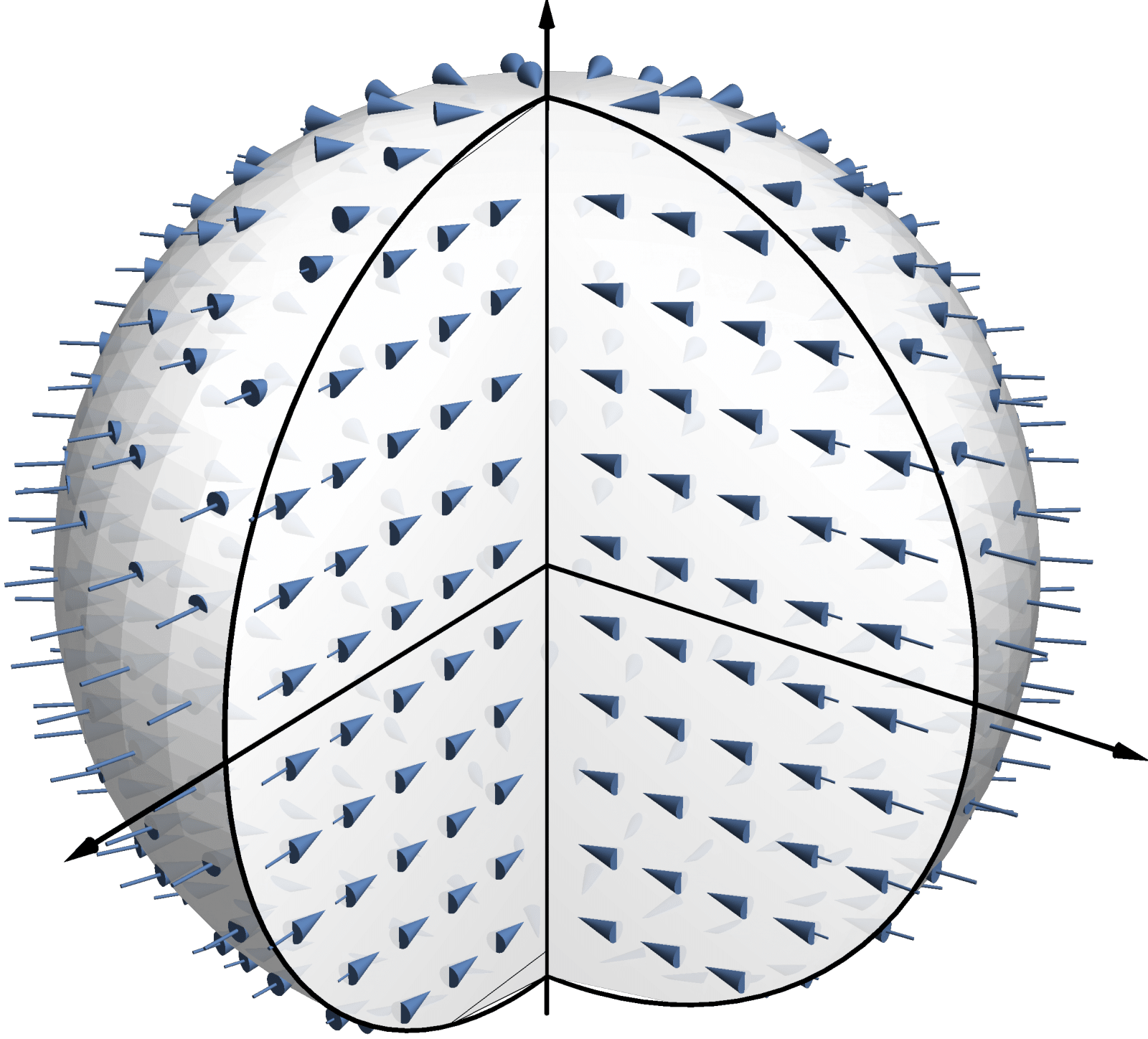} 
\end{minipage}
\begin{minipage}{.8\linewidth}
\caption{Induced flow by each of the parameters taken separately. Notice how the action of $\omega$ and $\gamma_2$ has no component along the $z$-axis.}
\end{minipage}
\label{F:flows}
\end{figure}

\subsection{Estimation protocol}\label{S:estimation}
The system is probed through the projective measurement associated with the observable  $E_1=\ket{1}\bra{1}$, which discriminates between the ground state $\ket{0}\bra{0}$, and the excited state $\ket{1}\bra{1}$. 
In the Bloch-ball representation, these correspond to the north and south poles, $(0,0,1)$ and $(0,0,-1)$ respectively. The probability of obtaining the excited outcome at time $t$ is 
\begin{equation}
    \Tr(E_1\rho(t))=\frac{1-z(t)}{2}.
\end{equation}
Using this projective measurement, we aim to construct an estimation procedure for the unknown system parameters  $\theta:=(\omega,\kappa,\gamma_1,\gamma_2)$ 
by preparing the qubit in preselected configurations by chaining pulses and relaxations and statistically estimating the quantity
 $\Tr(E_1\rho)$. 

\begin{assumption}
Throughout the paper, the initial state $\rho(0)$ is assumed to be the excited state  $\ket{1}\bra{1}$ (i.e. $z=-1$).
\end{assumption}

\begin{remark}
This initialization step can be achieved by repeatedly exciting the system, measuring $E_1$, and conditioning on successful excitations. Sections of the procedure could be adapted to either measurement outcome (with appropriate modifications), but we restrict to the excited-state initialization for clarity of exposition.
\end{remark}

Consider a preparable Bloch state $v\in \bar B(0,1)$.
We assume that this state can be reliably initialized, after which we perform a projective measurement of $E_1$, and record the binary outcomes. Let $\xi^k\in \{0,1\}$ denote the $k$-th measurement result. Then the sequence $(\xi^k)_{k\in \N}$ is an i.i.d. Bernoulli sequence with parameter $p=(1-z)/2=f(\theta)$, where $z$ is the Bloch $z$-coordinate of the prepared state $v$. The sum  $s=\sum_{k=1}^{n} \xi^k$ follows a Binomial law of parameter $(p,n)$, and the estimator $\hat p=s/n$ law well approximated by the Gaussian 
$\mathcal{N}\left(p,\frac{p(1-p)}{n}\right)$, in the sense that $\sqrt{n}(\hat{p}-p)$ converges in law towards $\mathcal{N}\left(0,p(1-p)\right)$.
Therefore estimation of the vector of parameters $\theta$ can be tackled via Delta method (see, e.g., \cite{casella2024statistical}) if we pick the right states $v$ and probabilities $p$.

Estimating $\gamma_1$ is straight-forward with this strategy. Take the initial state 
$\vin=(0,0,-1)$ and let it relax freely during a time $t_1$
The probability of obtaining an excited-state outcome yields the first experimental observable
$$
p_1=\frac{1}{2}(1-z(t_1))=e^{-\gamma_1 t_1}.
$$
Thus $\gamma_1$ can be directly estimated from the empirical observable $\hat p_1$. (See Section~\ref{S:ideal}).

Next we exploit short saturated pulses to create new configurations of the state. When  $u\to \infty$, as $A_u\sim  \kappa u \Omega_x$, the contribution of $\kappa u \Omega_x$ dominates the dynamics on short time frames of order $1/u$. 
In practice, the control amplitude is bounded, the model assuming $u\in [-\umax,\umax]$. We therefore apply a saturated pulse $u=\umax$ for a short duration
$$
t_2 = \frac{\tau_2}{\umax} = \eps \tau_2,\quad \eps:=\frac{1}{\umax},
$$
chosen so that, in the idealised limit $\eps \to 0$, the pulse implements an $x$-rotation of angle $\kappa \tau_2$.

A direct computation (see Section~\ref{S:ideal}), after this pulse, the second observable is
$$
p_2
=
\frac{1}{2}(1+\cos \kappa \tau_2)
+
O(\eps),
$$
where the $O(\eps)$ term quantifies the deviation from the ideal infinite-amplitude pulse. Consequently, the
empirical observable $\hat p_2$ deviates from the ideal $\frac{1}{2}(1+\cos \kappa \tau_2)$, which allows to estimate $\kappa$, with two sources of uncertainty that can be quantified separately: the statistical error (from replacing $p_2$ with $\hat{p}_2$) and the modeling error (from the finite-amplitude approximation).

Finally, let us discuss the case of $\omega$ and $\gamma_2$ without introducing explicit expressions at this stage. Inspecting the dynamics reveals that the contributions of $\gamma_1$ and $\kappa$ contain a component parallel to the measurement axis, whereas the effect of $\omega$ and $\gamma_2$ is essentially orthogonal to it. Consequently, while the observables $p_1$ and $p_2$ were obtained through a pure relaxation and a single short pulse, 
extracting information on $\omega$ and $\gamma_2$ requires creating configurations whose evolution possesses a transverse component. To this end, we introduce two additional observables $p_3,p_4$ by proceeding to a scheme of the form pulse--relaxation--opposite-pulse. This sequence displaces the state away from the measurement axis, allows it to evolve freely so that the transverse dynamics driven by $\omega$ and $\gamma_2$
accumulate, and then returns it near the measurement axis for readout. 

\textbf{Notation.} In order to match the above discussion, we assume now that $\theta=(\gamma_1,\kappa,\gamma_2,\omega)$.

\subsection{Estimator and uncertainty quantification}\label{S:quantification}

Summarizing the previous discussion, we have defined a vector of experimental observables $p\in (0,1)^4$ such that, still as $\eps=1/\umax$, we have a map $F_\eps:\R^4\to \R^4$ such that
$p=F_\eps (\theta).$
We estimate $\theta$ by inverting this relationship: 
\begin{equation}\label{E:inversion}
    \theta = F_\eps^{-1}(p).
\end{equation}
For this inversion to be well posed, $F_\eps$ must be invertible. 

Under suitable assumptions, we show that in the idealized regime $\eps=0$, the mapping is indeed invertible.

\begin{assumption}
The vector of unknown parameters lies in a known box:
\begin{equation*}
(\gamma_1,\kappa,\gamma_2,\omega)
\in 
\Theta=(\gmin_1,\gmax_1)\times (\kmin,\kmax)\times(\gmin_2,\gmax_2)\times(\omin,\omax),
\end{equation*}
with $\bar\Theta$ a compact subset of $(0,\infty)^4$.
\end{assumption}

Under this assumption, with a choice of experimental parameters (pulse and relaxation durations) described in Section~\ref{S:ideal}, we are able to prove (see Lemma~\ref{L:diffeo}) that the mapping $F_0:\Theta\to F_0(\Theta)\subset(0,1)^4$ is a diffeomorphism.

\begin{remark}
The restriction to $\Theta$ guarantees injectivity of $F_0$ by preventing ambiguities in certain observables. For instance, the mapping $\tau_2\mapsto p_2$ cannot be globally injective without restriction. In order to relax this constraint, an adaptive estimation scheme can be employed to disambiguate $\cos \kappa \tau_2$. This is the topic of Section~\ref{S:local}.
\end{remark}

The diffeomorphism property ensures that \eqref{E:inversion} is well posed in the idealized regime. In practice, however, the inverse $F_\eps^{-1}$ is not available in closed form outside of $\eps=0$.
Instead, in Section~\ref{S:bound} we are able to show that bounding pulses by $\umax$ produces a perturbation of order $1/\umax$, quantifiable explicitly (see Lemma~\ref{P:bound_effective}).
\begin{proposition}\label{P:boundInit}
There exists a constant $C>0$ such that 
$$
\sup_{\theta\in \Theta}|F_0(\theta)-F_\eps(\theta)|\leq C \eps.
$$
\end{proposition}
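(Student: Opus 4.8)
The plan is to treat $F_\eps$ component by component, since each observable $p_i$ is produced by a concrete concatenation of pulse and free-relaxation segments, and to show that replacing every saturated pulse of physical duration $\eps\tau$ by its idealized instantaneous rotation incurs an error of order $\eps$ uniformly over the compact box $\Theta$. The first component $p_1$ involves no pulse at all, so $F_0$ and $F_\eps$ agree exactly there; the nontrivial estimates concern $p_2,p_3,p_4$, which each involve one or two pulses of the form $u=\umax$ on an interval of length $\eps\tau_j$.

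The key step is a Duhamel / variation-of-constants comparison for a single pulse segment. On an interval of length $\eps\tau$ with $u=\umax=1/\eps$, the flow of $\dot v = A_{\umax}v + b$ is compared with the flow of $\dot v = \tfrac{\kappa}{\eps}\Omega_x v$, i.e. the pure $x$-rotation by angle $\kappa\tau$. Rescaling time by $s = t/\eps$ turns the pulse dynamics into $\tfrac{\diff v}{\diff s} = \kappa\Omega_x v + \eps(\omega\Omega_z + \gamma_1\Gamma_1 + \gamma_2\Gamma_2)v + \eps b$, so the generator is an $O(\eps)$ perturbation of $\kappa\Omega_x$ on a fixed $s$-interval $[0,\tau]$. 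Standard Gronwall/Duhamel then gives that the pulse propagator differs from $e^{\kappa\tau\Omega_x}$ by $O(\eps)$ in operator norm, with a constant depending only on $\tau$, $\|\Omega_z\|,\|\Gamma_1\|,\|\Gamma_2\|$, $\|b\|$, and the bounds $\kmax,\omax,\gmax_1,\gmax_2$ from the compactness of $\bar\Theta$. Because the Bloch vector stays in the unit ball, all states entering each segment are uniformly bounded, so the per-segment $O(\eps)$ errors do not blow up.

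The remaining step is to propagate these per-segment estimates through the (fixed, finite) concatenation defining each $p_i$: a product of at most a bounded number of propagators (pulse, free relaxation over a duration that is $\eps$-independent, opposite pulse), each of operator norm bounded uniformly on $\Theta$ (the free-relaxation propagators over a compact time interval are uniformly bounded, and rotations are isometries). A telescoping identity $\Phi_1\cdots\Phi_m - \Psi_1\cdots\Psi_m = \sum_k \Psi_1\cdots\Psi_{k-1}(\Phi_k-\Psi_k)\Phi_{k+1}\cdots\Phi_m$ then converts the $O(\eps)$ bound on each factor difference into an $O(\eps)$ bound on the composed affine map, hence on $p_i = \tfrac12(1 - z_i)$. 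Taking $C$ to be the maximum over $i\in\{1,2,3,4\}$ of the resulting constants, and the supremum over the compact set $\bar\Theta$ (on which all these constants are continuous in $\theta$, hence bounded), yields the claim.

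The main obstacle is bookkeeping rather than conceptual: one must make the constants genuinely uniform in $\theta\in\Theta$, which requires checking that none of the intermediate quantities (pulse angles $\kappa\tau_j$, relaxation times, intermediate Bloch states) degenerate at the boundary of $\bar\Theta$ — this is exactly why the assumption that $\bar\Theta$ is a compact subset of $(0,\infty)^4$ is invoked. A secondary point of care is that the free-relaxation durations used for $p_3,p_4$ must be verified to be independent of $\eps$ (or at worst bounded as $\eps\to0$), so that the relaxation propagators contribute no hidden $\eps$-dependence; given the protocol of Section~\ref{S:ideal} this is immediate, but it should be stated explicitly when the per-component estimates are assembled.
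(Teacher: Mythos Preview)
Your proposal is correct and follows the same overall strategy as the paper's proof (carried out in Proposition~\ref{P:bound_effective}): the first component involves no pulse and is exact, each saturated pulse deviates from its idealized rotation by $O(\eps)$, and a telescoping decomposition of the composed affine maps pushes this through to each observable. The one substantive difference is in how the per-pulse $O(\eps)$ bound is obtained and used: the paper applies the mean value theorem together with the integral formula for $\partial_\eta e^{\Omega^\pm(\eta)}$ and the contractivity of $\Omega^\pm(\eta)$ to produce \emph{explicit} $\theta$-dependent constants $C(\theta)$, $C'(\theta,t)$, which are later reused to construct the bias box $\Delta$ in Theorem~\ref{T:mainthm}; your time-rescaling plus Gronwall, followed by a compactness argument on $\bar\Theta$, is slightly softer and entirely sufficient for the bare existence statement of Proposition~\ref{P:boundInit}, but would not by itself supply the quantitative constants needed downstream.
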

Consequently, we can also write
\begin{equation}\label{E:defG}
\theta = F_0^{-1}(p) + \eps G_\eps(p)
\text{ s.t. }
\sup_{p\in F(\Theta)}\sup_{\eps\in[0,1]} |(G_\eps(p))|<\infty.
\end{equation}
These bounds allows to partially circumvent the issue of inverting $F_\eps$. 
First, since $F_0$ is a diffeomorphism, the perturbative estimate implies that for $\varepsilon$ small enough, $F_\varepsilon$ is a diffeomorphism as well. Indeed, 
$$
\Jac F_\varepsilon^{-1}
=
\Jac F_0^{-1}(\id +\eps  \Jac F_0\Jac G_\varepsilon).
$$
Hence as long as $\eps \|\Jac F_0 \Jac G_\varepsilon\| \leq 1$, the mapping is sure to be a diffeomorphism. Second, from Proposition~\ref{P:boundInit}, we obtain a  computable box $\Delta=[-\Delta_1,\Delta_1]\times \cdots\times [-\Delta_4,\Delta_4]$ such that $F_\eps^{-1}(p)-F_0^{-1}(p)\in \eps \Delta+o(\eps)$.

We now select the estimator
$$
\hat \theta 
=
F_0^{-1}(\hat p)
$$
which is biased due to the $\eps$-perturbation but computationally convenient.

By construction, since measurements generating distinct observables are independent, $\hat p_i$ and $\hat p_j$ are independent if $i\neq j$. As such, we classically have the convergence in law of $\sqrt{n}(\hat{p}-p)$ towards the Gaussian $\mathcal{N}(0, D)$,
letting $D=\diag(p)\cdot(\id -\diag (p))$. Applying the Delta method to approximate the law of $\hat\theta$ yields convergence in law of $\sqrt{n}(\hat{\theta}-mu)$ towards  $\mathcal{N}(0, \Sigma)$, where 
$$
\mu = F_\varepsilon^{-1}(p)= F_0^{-1}(p) =\theta - \eps G_\eps(p)
$$
and 
$$
\Sigma = \Jac F_0^{-1}(p)D\Jac F_0^{-1}(p)^\top.
$$
Thus the uncertainty in $\hat \theta$ can be decomposed cleanly into \textit{(i)} a deterministic modeling bias of order $\eps$ and \textit{(ii)} a statistical uncertainty of order $1/\sqrt{n}$ with modeling bias negligible at this order.

The asymptotic distribution of $\sqrt{n}(\hat{\theta}-\mu)$ allows to find a first order approximation of a $(1-\alpha)100\%$ confidence ellipsoid.
We approximate the covariance $\Sigma$ by substitution with
$$
\hat \Sigma = 
\Jac F_0^{-1}(\hat p)
\diag(\hat p)\cdot(\id -\diag (\hat p))
\Jac F_0^{-1}(\hat p)^\top,
$$
and have the $(1-\alpha)100\%$ confidence ellipsoid for $\mathcal{N}(0, \Sigma)$ approximated by
$$
\hat{\mathcal{E}}_\alpha=
\left\{
\theta\in \R^4
\mid
\theta^\top
\hat\Sigma^{-1}
\theta\leq
\chi^2_{4,1-\alpha}
\right\}
$$
(where $\chi^2_{4,1-\alpha}$ designate the $(1-\alpha)$-quantile of the $\chi^2$ distribution with $4$ degrees of freedom).
From this, we deduce a conclusive main result.

\begin{theorem}\label{T:mainthm}
Under the above assumptions, as $n,\umax\to \infty$, the estimator $\hat \theta$ of $\theta$ is asymptotically unbiased. Moreover, a $(1-\alpha)100\%$ confidence region for $\theta$ is, to first order approximation in $1/\sqrt{n}$ and $1/\umax$,
$$
\mathrm{CR}_{1-\alpha}=\hat{\theta}+\frac{1}{\sqrt{n}}\hat{\mathcal{E}}_\alpha+\frac{1}{\umax}\Delta +o\left(\frac{1}{\sqrt{n}}+\frac{1}{\umax}\right).
$$
\end{theorem}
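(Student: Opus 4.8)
The plan is to assemble the pieces already prepared in Section~\ref{S:quantification}, since nearly everything needed is set up there. Concretely, I would combine (i) the joint central limit theorem for the empirical frequencies $\hat p$, (ii) the Delta method applied through the inverse map $F_0^{-1}$, (iii) the perturbative estimate of Proposition~\ref{P:boundInit}, and (iv) the consistency of the plug-in covariance $\hat\Sigma$.

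First I would record the joint CLT for $\hat p$. Each coordinate $\hat p_i = s_i/n$ is a rescaled $\mathrm{Binomial}(n,p_i)$ variable, and because the experimental runs producing distinct observables are independent, the components of $\hat p$ are independent; hence $\sqrt n(\hat p - p)\xrightarrow{d}\mathcal N(0,D)$ with $D=\diag(p)(\id-\diag(p))$, and also $\hat p\xrightarrow{P}p$. Next I would apply the Delta method (see \cite{casella2024statistical}) to $g=F_0^{-1}$ at the point $p$. This requires $F_0^{-1}$ to be $C^1$ near $p=F_\eps(\theta)$: Lemma~\ref{L:diffeo} gives that $F_0$ is a diffeomorphism onto the open set $F_0(\Theta)$, and Proposition~\ref{P:boundInit} places $p$ within distance $C\eps$ of $F_0(\theta)\in F_0(\Theta)$, so for $\umax$ large enough $p$ sits in the domain of a $C^1$ extension of $F_0^{-1}$. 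The Delta method then yields $\sqrt n(\hat\theta-\mu)\xrightarrow{d}\mathcal N(0,\Sigma)$ with $\mu=F_0^{-1}(p)$ and $\Sigma=\Jac F_0^{-1}(p)\,D\,\Jac F_0^{-1}(p)^\top$, recovering the $\mu$ and $\Sigma$ written above. For asymptotic unbiasedness I would use that $F_0^{-1}$ is Lipschitz on the image of the compact $\bar\Theta$, so Proposition~\ref{P:boundInit} gives $|\mu-\theta|=|F_0^{-1}(F_\eps(\theta))-F_0^{-1}(F_0(\theta))|\le LC\eps=O(1/\umax)\to0$; this is the content of the identity $\mu=\theta-\eps G_\eps(p)$ from \eqref{E:defG}.

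For the confidence region I would first observe that $\Sigma$ is nonsingular, being the product of the invertible matrix $\Jac F_0^{-1}(p)$ with the positive-definite diagonal matrix $D$. Since $\hat p\xrightarrow{P}p$ and $F_0^{-1}$, $\Jac F_0^{-1}$ are continuous near $p$, the continuous mapping theorem gives $\hat\Sigma\xrightarrow{P}\Sigma$; combining this with the Delta-method limit and Slutsky's lemma yields $n(\hat\theta-\mu)^\top\hat\Sigma^{-1}(\hat\theta-\mu)\xrightarrow{d}\chi^2_4$, hence $\mathbb{P}\big(\mu\in\hat\theta+\tfrac1{\sqrt n}\hat{\mathcal E}_\alpha\big)\to1-\alpha$. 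I would then transfer this from $\mu$ to $\theta$ by writing $\theta=\mu+\eps G_\eps(p)$ and using $\eps G_\eps(p)\in\eps\Delta+o(\eps)$ from \eqref{E:defG} with $\eps=1/\umax$: the event $\{\mu\in\hat\theta+\tfrac1{\sqrt n}\hat{\mathcal E}_\alpha\}$ then forces $\theta\in\hat\theta+\tfrac1{\sqrt n}\hat{\mathcal E}_\alpha+\tfrac1{\umax}\Delta+o(1/\umax)$, and conversely the two regions differ only through the $o(1/\sqrt n+1/\umax)$ corrections produced by the second-order Delta-method remainder ($O_P(1/n)$) and by $\hat\Sigma-\Sigma$ (an $O_P(1/\sqrt n)$, hence $o(1)$, relative perturbation of the $\tfrac1{\sqrt n}$-scale ellipsoid). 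This is precisely $\mathrm{CR}_{1-\alpha}$, with the announced asymptotic coverage.

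The step I expect to be the main obstacle is invoking the Delta method at the perturbed point $p=F_\eps(\theta)$ rather than at $F_0(\theta)$: I must use the openness of $F_0(\Theta)$ together with Proposition~\ref{P:boundInit} to guarantee that $F_0^{-1}$ is genuinely $C^1$ there once $\umax$ is large, and --- more delicately --- keep all the $\eps$-dependent remainders uniform in $\theta\in\Theta$, so that the two asymptotic regimes $n\to\infty$ and $\umax\to\infty$ can be taken together and the final Minkowski-sum statement with its $o(\cdot)$ term is meaningful. Everything else (the joint CLT, the Delta method proper, Slutsky's lemma, and the continuous mapping theorem) is routine once that uniformity is secured.
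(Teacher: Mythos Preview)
Your proposal is correct and follows essentially the same route as the paper: the paper assembles the joint CLT for $\hat p$, the Delta method through $F_0^{-1}$, the plug-in covariance $\hat\Sigma$, and the perturbative bound (Proposition~\ref{P:boundInit}/\ref{P:bound_effective}) to build $\Delta$, then states that Theorem~\ref{T:mainthm} follows from this analysis. If anything, you are slightly more explicit than the paper about the Slutsky/continuous-mapping step for $\hat\Sigma\to\Sigma$ and about why $p=F_\eps(\theta)$ lies in the domain of $F_0^{-1}$, but the underlying argument is the same.
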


In the next two sections we focus on the technical aspects that allow to prove this statement.

\section{Estimation in the idealized infinite pulse regime}\label{S:ideal}

In the present section, we discuss the design of the experiment in order to produce the mapping $F$ between the space of parameters and the space of experimental observables. This design is achieved in the idealized asymptotic regime $\umax\to \infty$. Again, letting $\eps = 1/\umax$, and $F_0$ designate the asymptotic ideal mapping, we prove that $F_0$ is a diffeomorphism under the right assumptions.

Finally, throughout the paper, we consider the mappings $h:v\mapsto \frac{1}{2}(1-z)$ and 
$$
\phi(v_0;u,t)
=
e^{A_u t} v_0
+
\int_0^t e^{A_u (t-s)} b \diff s.
$$
As such,  $\phi(v_0;u,t)$ is the state at time $t$ of a solution of System~\ref{E:dynamics} with initial condition $v_0$ and under constant control $u$, and $h(v)$ is the probability $p$ associated with the observable $E_1$ for the state $v$.

In particular, we introduce three main transformation following from $\phi$: letting $t_3$ be a parameter to be set later and $\eps=1/\umax$ as usual:
\begin{equation}
\begin{aligned}
P^+_{t}(v)
&=
\phi(v;\umax,t),
\\
P^-_{t}(v)
&=
\phi(v;-\umax,t),
\\
R_{t}(v)
&=
\phi(v;0,t).
\end{aligned}
\end{equation}
The transformations $P^+$ and $P^{-}$ are two saturated pulses, while $R_t$ is a relaxation. We now use these transformation to design the experimental observables.

\subsection{Parameters parallel to the observation}

\subsubsection[Estimation of]{Estimation of $\gamma_1$.} Consider the initial state $v(0)=\vin=(0,0,-1)$. Letting the system relax without control ($u=0$) for time $t$ leads to the evolution
$$
v(t)= 
R_t(\vin)
=
\begin{pmatrix}
0\\0\\1-2 e^{-\gamma_1 t}
\end{pmatrix}.
$$
For a fixed time $t_1$, we determine $p_1=h(v(t_1))$ to be $(1-z(t_1))/2=e^{-\gamma_1 t_1}$. Solving for $\gamma_1$, we recover 
$$
\gamma_1=-\frac{\log p_1}{t_1}.
$$
Following Section~\ref{S:estimation}, we let $\hat{p}_1=s_1/n$ be the estimator of $p_1$ by counting the number of excited state outcomes over $n$ repetition of the state. Then the estimator for $\gamma_1$ is given by
$$
\hat{\gamma}_1= - \frac{\log \hat p_1}{t_1}= - \frac{\log (s_1/n)}{t_1}.
$$
In particular $\E[\hat\gamma_1]=\gamma_1$ and (with the approximation $\Var (f(\hat p))$ by $f'(\E(\hat p))\Var(\hat p)f'(\E(\hat p))$)
$$
\Var(\hat\gamma_1)=\frac{e^{\gamma_1 t_1}-1}{n t_1^2}+o\left(\frac{1}{n}\right).
$$
\begin{remark}
For fixed $t_1$, the uncertainty depends on $\gamma_1$, and the estimation may be more or less reliable. The choice of $t_1$ can be left to a minimizaltion principle, that is, for $q\in[1,\infty]$, pick 
$$
t_1=\arg\min_{t_1>0} \left\|\gamma_1\mapsto\frac{e^{\gamma_1 t_1}-1}{n t_1^2}\right\|_{L^q([\gmin_1,\gmax_1])}.
$$
In the present paper, we will focus on the case  $q=\infty$, then $t_1$ is defined as the unique solution of \begin{equation}\label{E:deft1}
    e^{\gmax_1 t_1}(2-\gmax_1 t_1)=2.
\end{equation}
\end{remark}

\subsubsection[Estimation of]{Estimation of $\kappa$.} We realize a pulse with initial state $\vin$, of control $u=\umax$ of time $t_2=\tau_2/\umax$:
$$
v(t_2)
=
P^+_{\tau_2/\umax}(\vin).
$$
Now we push $\umax\to \infty$ in order to reach the asymptotic regime.
Setting $u=1/\varepsilon$, and $t_2=\varepsilon \tau_2$, this yields
$$
v(t_2)
=
e^{\kappa \Omega_x \tau_2 + \varepsilon A_0 \tau_2 } \vin
+
\int_0^{\varepsilon \tau_2} e^{A_u (\varepsilon \tau_2-s)} b \diff s.
$$
Let $\Omega(\varepsilon)= \kappa \Omega_x \tau_2 + \varepsilon A_0 \tau_2$ and change variables in the integral to $\sigma= s/(\tau_2\varepsilon)$  so that
$$
\int_0^{\varepsilon \tau_2} e^{A_u (\varepsilon \tau_2-s)} b \diff s
=
\tau_2 \varepsilon
\int_0^{1} e^{\Omega(\varepsilon) (1-\sigma)} b \diff \sigma.
$$
Letting $\varepsilon\to 0$, $\Omega(\varepsilon)\to \Omega(0)= \kappa \Omega_x \tau_2$ and
$$
v(\eps \tau_2)\xrightarrow[\eps\to 0]{} 
e^{\kappa \Omega_x \tau_2} \vin
=
\begin{pmatrix}
0\\ \sin \kappa t_2 \\ -\cos \kappa t_2
\end{pmatrix}
$$
Hence
$$
p_2=
h 
(\phi(\vin;1/\eps,\eps \tau_2))\xrightarrow[\eps\to 0]{} \frac{1}{2}(1+\cos \kappa \tau_2).
$$

We can now discuss the choice of $\tau_2$. Assuming first that $\kappa\tau_2<\pi$ ensures that we can compute the inversion
$$
\kappa
=
\frac{1}{\tau_2}\arccos
(2 p_2- 1).
$$
As before, following Section~\ref{S:estimation}, we let $\hat{p}_2=s_2/n$ be the estimator of $p_2$ (over $n$ repetition of the state). Then the estimator for $\kappa$ we have picked is
$$
\hat{\kappa}= \frac{1}{\tau_2}\arccos
(2 \hat p_2- 1),
$$
so that, in the idealized regime, $\E[\hat\kappa]=\kappa$ and 
\begin{equation}
\Var(\hat \kappa)=\frac{1}{n\tau_2^2}+o\left(\frac{1}{n}\right).    
\end{equation}
In order to minimize the variance, we are inclined to select
$$
\tau_2=\frac{\pi}{\kmax}.
$$
The above equation doesn't account for the approximation of the law of $\hat p_2$ as a Gaussian, which breaks in the case $\tau_2=\frac{\pi}{\kmax}$ and $\kappa=\kmax$ (which would theoretically put $p_2$ at 1). Therefore we pick, for a small arbitrary parameter $\beta\in(0,1)$ and the pulse time
$$
\tau_2=(1-\beta)\frac{\pi}{\kmax}.
$$
\begin{remark}
Since $p_2\in(0,1)$, the variance of $\hat{p}_2$ is bounded by $1/4 n$. 
This allows to bound below the necessary $n$ in order to preserve our approximation (in connection to $\kmin/\kmax$ and $\beta$.
Assuming we want $p_2$ to be $m\sigma$ away from $0$ or $1$, this that we must require $$\kappa \tau_2\in\left( \arccos \left(1-\frac{m}{\sqrt{n}}\right),\arccos \left(\frac{m}{\sqrt{n}}-1 \right)\right).$$
From $\arccos \left(x-1 \right)-\arccos \left(1-x\right)=2\arcsin 1-x$, this leads to
$$
\tau_2(\kmax-\kmin)<2\arcsin \left(1-\frac{m}{\sqrt{n}}\right).
$$
If $\tau_2=(1-\beta)\frac{\pi}{\kmax}$, for what $n$ is the above satisfied?
Simplifying $(1-\beta)\pi(1-\frac{\kmin}{\kmax})<2\arcsin \left(1-\frac{m}{\sqrt{n}}\right)$ yields
$$
n>\frac{m^2}{\left(1-\sin\left( (1-\beta)(1-\frac{\kmin}{\kmax})\frac{\pi}{2} \right)\right)^2}.
$$
For example, picking $m=5$, $\beta=1/10$, $\kmin/\kmax=1/10$ yields approximately $n>1.28\times 10^4$.
\end{remark}

\subsection{Parameters orthogonal to the observation}

As discussed in Section~\ref{S:estimation}, in order to access the parameters $\omega,\gamma_2$, we follow a framework pulse--relaxation--opposite-pulse. Recalling $h(v)=\frac{1}{2}(1-z)$, we select the two experimental observables $p_3,p_4$ to be 
$$
\begin{aligned}
p_3 &=h(P_{\eps \tau_2}^{-}\circ R_{t_3} \circ P_{\eps \tau_2}^+(\vin)),
\\
p_4&=h(P_{\eps \tau_2}^{-}\circ R_{2 t_3} \circ P_{\eps \tau_2}^+(\vin)).
\end{aligned}
$$
With this last element, we are now able to give an expression for the mapping $F_\eps$:
\begin{equation}
F_\eps(\theta)
=
\begin{pmatrix}
h(R_{t_1}(\vin))
\\
h(P_{\eps \tau_2}^+(\vin))
\\
h(P_{\eps \tau_2}^{-}\circ R_{t_3} \circ P_{\eps \tau_2}^+(\vin))
\\
h(P_{\eps \tau_2}^{-}\circ R_{2 t_3} \circ P_{\eps \tau_2}^+(\vin))
\end{pmatrix}
\end{equation}

Since $h$ and $v\mapsto\phi(v ,u,t)$ are affine maps, the expression for $F_0$ is easily by passing $P_{\eps \tau_2}^\pm$ to its limit, the $x$-rotation $v\mapsto e^{\pm\kappa \Omega_x \tau_2}$.

Then we prove the following lemma, which  we announced in Section~\ref{S:quantification}.
\begin{lemma}\label{L:diffeo}
Up to a choice of $t_1,\tau_2,t_3$, the map $F_0:\Theta\to (0,1)^4$ is a diffeomorphism.
\end{lemma}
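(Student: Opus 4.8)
The plan is to establish that $F_0$ is a diffeomorphism onto its image by showing it is an injective immersion on the compact box $\bar\Theta$; since $\Theta$ is open and $F_0$ is smooth (it is a composition of affine maps whose coefficients depend smoothly on $\theta$, followed by $h$), it suffices to check that the Jacobian $\Jac F_0$ is everywhere invertible on $\bar\Theta$ and that $F_0$ is injective. I would exploit the quasi-triangular structure of the four observables. The first two components decouple completely: $p_1 = e^{-\gamma_1 t_1}$ depends only on $\gamma_1$, and $p_2 = \tfrac{1}{2}(1+\cos\kappa\tau_2)$ depends only on $\kappa$. On the restricted box, $\gamma_1\mapsto p_1$ is strictly monotone, and $\kappa\mapsto p_2$ is strictly monotone provided $\kappa\tau_2 \in (0,\pi)$, which is exactly what the choice $\tau_2 = (1-\beta)\pi/\kmax$ guarantees. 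Hence the upper-left $2\times2$ block of $\Jac F_0$ is diagonal with nonzero entries, and $(\gamma_1,\kappa)$ is recovered first.

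Next I would compute $p_3$ and $p_4$ explicitly. After the first ideal pulse $v_0 := e^{\kappa\Omega_x\tau_2}\vin = (0,\sin\kappa\tau_2,-\cos\kappa\tau_2)$ is a known function of $\kappa$ alone. The relaxation $R_{t_3}$ acts by $e^{A_0 t_3}$ plus the affine drift term; writing $A_0 = \omega\Omega_z + \gamma_1\Gamma_1 + \gamma_2\Gamma_2$ and using that $\Gamma_1,\Gamma_2$ are diagonal, the block acting on $(x,y)$ is $-(\tfrac{\gamma_1}{2}+2\gamma_2)\id_2 + \omega\begin{pmatrix}0&-1\\1&0\end{pmatrix}$, so $e^{A_0 t_3}$ restricted to the $(x,y)$-plane is $e^{-(\gamma_1/2 + 2\gamma_2)t_3}$ times a rotation by angle $\omega t_3$, while the $z$-component relaxes toward $1$ at rate $\gamma_1$ independently. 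Since $h$ reads off only the final $z$-coordinate and the closing pulse $P^-$ is an $x$-rotation by $-\kappa\tau_2$ (which mixes $y$ and $z$), one obtains $p_3 = c_0(\gamma_1,\kappa) + c_1(\gamma_1,\kappa)\,e^{-(\gamma_1/2+2\gamma_2)t_3}\cos(\omega t_3 + \psi(\kappa))$ for explicit $c_0,c_1,\psi$, with $c_1 \neq 0$ as long as $\sin\kappa\tau_2\neq 0$ (again ensured by $\kappa\tau_2\in(0,\pi)$), and $p_4$ is the same expression with $t_3$ replaced by $2t_3$. Treating $(\gamma_1,\kappa)$ as already known, the remaining map $(\gamma_2,\omega)\mapsto(p_3,p_4)$ has the form $(r,\varphi)\mapsto(\rho\cos\varphi, \rho^2\cos2\varphi)$ up to known rescalings, where $\rho = e^{-(\gamma_1/2+2\gamma_2)t_3} \in (0,1)$ and $\varphi = \omega t_3 + \psi$. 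Using $\cos2\varphi = 2\cos^2\varphi - 1$, from $p_3,p_4$ one solves algebraically for $\rho^2$ (hence $\gamma_2$) and $\cos\varphi$, and then for $\varphi$ itself provided $\varphi$ stays in an interval of length $<\pi$; this is where I would choose $t_3$ small enough that $\omega t_3$ ranges over a sub-interval of length $<\pi$ as $\omega$ runs over $(\omin,\omax)$, and also small enough to keep $\rho$ bounded away from $0$ and the Jacobian nondegenerate. Verifying that the $2\times2$ lower block $\partial(p_3,p_4)/\partial(\gamma_2,\omega)$ is invertible is a direct computation: its determinant is a nonvanishing multiple of $\rho^3 \sin\varphi$ times a factor coming from the $t_3$-vs-$2t_3$ difference, nonzero under the stated smallness conditions on $t_3$.

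Assembling these pieces, $\Jac F_0$ is block lower-triangular with invertible diagonal blocks, hence invertible on all of $\bar\Theta$; and injectivity follows from the sequential reconstruction $\gamma_1 \leftarrow p_1$, $\kappa \leftarrow p_2$, then $(\gamma_2,\omega)\leftarrow(p_3,p_4)$, each step being a bijection onto its range by the monotonicity/interval-length arguments above. Therefore $F_0$ is a smooth bijection from $\Theta$ onto $F_0(\Theta)$ with everywhere-invertible differential, i.e. a diffeomorphism, which proves the lemma. I expect the main obstacle to be the explicit trigonometric bookkeeping in the $p_3,p_4$ computation — in particular pinning down the phase $\psi(\kappa)$ and the amplitude $c_1(\gamma_1,\kappa)$ correctly through the conjugation by the two $x$-rotations and the non-commuting drift, and then checking that the constraints on $t_3$ needed for injectivity (interval length $<\pi$) are mutually compatible with those needed for $\rho$ to stay in a useful range; all of this is elementary linear algebra and calculus but somewhat lengthy, and is presumably carried out in the remainder of Section~\ref{S:ideal}.
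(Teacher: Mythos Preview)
Your approach is essentially the paper's: exploit the block-triangular structure to reduce to the map $(\gamma_2,\omega)\mapsto(q_3,q_4)=(\rho\cos\omega t_3,\,\rho^2\cos 2\omega t_3)$ with $\rho=e^{-\frac12(\gamma_1+4\gamma_2)t_3}$, whose Jacobian determinant $-4t_3^2 e^{-\frac32(\gamma_1+4\gamma_2)t_3}\sin\omega t_3$ is nonzero once $\omax t_3<\pi$ (together with $\omin>0$). One clarification that resolves your anticipated obstacle: the phase $\psi(\kappa)$ in fact vanishes, because after the first $x$-pulse the state has $x=0$, so the $z$-rotation in $R_t$ produces a pure $\cos\omega t$ in the $y$-component that feeds into the final $z$; this is essential, since a nonzero $\psi$ would make $p_4$ carry $\cos(2\omega t_3+\psi)$ rather than $\cos 2\varphi=\cos(2\omega t_3+2\psi)$, and the double-angle reduction you invoke would not go through as written.
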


\begin{proof}
The durations $t_1$, $\tau_2$ have already been chosen. Let us see what can be said about $p_3,p_4$.
An explicit expression of $R_{t}$ is computable:
$$
e^{A_0 t}
=
\begin{pmatrix}
 e^{-\frac{1}{2} (\gamma_1+4 \gamma_2)t} \cos  \omega t & -e^{-\frac{1}{2}  (\gamma_1+4 \gamma_2)t} \sin \omega t & 0 \\
 e^{-\frac{1}{2}  (\gamma_1+4 \gamma_2)t} \sin \omega t & e^{-\frac{1}{2}  (\gamma_1+4 \gamma_2)t} \cos \omega t & 0 \\
 0 & 0 & e^{-\gamma_1 t}
\end{pmatrix}
$$
and 
$$
\int_0^t e^{A_0 (t-s)} b \diff s=
\begin{pmatrix}
0 
\\
0 
\\
1-e^{-\gamma_1 t}
\end{pmatrix}.
$$
From this we deduce
\begin{multline}\label{E:fullh}
h(P_{\eps \tau_2}^{-}\circ R_{t} \circ P_{\eps \tau_2}^+(\vin))
=
\frac{1}{2} \Big(
1
-\cos (\kappa \tau_2) \left(1-e^{-\gamma_1 t}-\cos \kappa \tau_2\right)
\\
+\sin ^2(\kappa \tau_2) e^{-\frac{1}{2}(\gamma_1+4\gamma_2)t} \cos  \omega t \Big)    
\end{multline}
Since $\gamma_1$ and $\kappa$ are fully determined by the first two lines of $F_0$, we only have to show that for fixed $(\gamma_1,\kappa)\in [\gmin_1,\gmax_1]\times[\kmin,\kmax]$, the mapping 
$$
(\gamma_2,\omega)\mapsto
\begin{pmatrix}
h(P_{\eps \tau_2}^{-}\circ R_{t_3} \circ P_{\eps \tau_2}^+(\vin))
\\
h(P_{\eps \tau_2}^{-}\circ R_{2 t_3} \circ P_{\eps \tau_2}^+(\vin))
\end{pmatrix}
$$
is a diffeomorphism. From expression~\eqref{E:fullh}, we can see that this reduces to showing that 
\begin{equation}\label{E:defpsi}
\psi:(\gamma_2,\omega)\mapsto
\begin{pmatrix}
e^{-\frac{1}{2}(\gamma_1+4\gamma_2)t_3} \cos  \omega t_3 
\\
e^{-(\gamma_1+4\gamma_2)t_3} \cos  2\omega t_3
\end{pmatrix}
=:
\begin{pmatrix}
q_3\\q_4
\end{pmatrix}
\end{equation}
is a diffeomorphism onto its image (by construction of $\tau_2$, $\sin(\kappa \tau_2)\neq0$). The easiest is to prove it via inverse function theorem. If $\omax t_3<\pi$, then for all $(\omega,\gamma_2)\in[\omin,\omax]\times[\gmin_2,\gmax_2]$
$$
\det \Jac \psi =
-4 t_3^2 e^{-\frac{3}{2}  (\gamma_1+4 \gamma_2)t_3} \sin  \omega t_3<0.
$$
From this we deduce that $\psi$ is a diffeomorphism on its domain, which then extends to $F_0$ and concludes the proof. 
\end{proof}

\begin{remark}
As appears in the proof, an ideal choice for the pulse would be to pick rather than $\tau_2$, an adaptive time $\tau_2'=\pi/(2 \hat{\kappa})$ in order to maximize  $\sin(\kappa \tau'_2)^2$, and, in turn, the role of $\gamma_2$ and $\omega$ in $F_0$. This, however, complicates the interconnection between the bias and the estimation, but would be an interesting question to investigate.
\end{remark}

Let us now discuss the inversion of $F_0$. 
Let first us recover $\gamma_2$ and $\omega$ as functions of $(q_3,q_4)$.
Since $\cos  2\omega t_3 = 2 \cos^2(\omega t_3)-1$, we isolate $\gamma_2$ as
$$
\begin{aligned}
\cos(\omega t_3) = q_3  e^{\frac{1}{2}(\gamma_1+4\gamma_2)t_3} ,
\qquad \qquad 
q_4 &= e^{-(\gamma_1+4\gamma_2)t_3}(2q_3^2  e^{(\gamma_1+4\gamma_2)t_3}-1)     
\\
&=
2q_3^2-e^{-(\gamma_1+4\gamma_2)t_3}
\end{aligned}
$$
As a result
$$
e^{-(\gamma_1+4\gamma_2)t_3}=2q_3^2-q_4
\;\text{ and }\;
\cos \omega t_3 
= 
\frac{q_3}{\sqrt{2q_3^2-q_4}}.
$$
This allows to separate $\gamma_2$ and $\omega$.
Next, we can express $q_3,q_4$ in terms of $p$ using the relation
$$
\begin{aligned}
2p_2(1-p_2)q_3&=p_3+p_2-1+p_2(1-2p_2)p_1^{t_3/t_1},
\\
2p_2(1-p_2)q_4&=p_4+p_2-1+p_2(1-2p_2)p_1^{2t_3/t_1}.
\end{aligned}
$$
Hence we effectively get the inversion $p\mapsto (\gamma_2,\omega)$ through
$$
\gamma_2=-\frac{1}{4 t_3}\log\left( \frac{2q_3^2-q_4}{p_1^{t_3/t_1} }\right),\qquad
\omega =\frac{1}{t_3}\arccos \left(\frac{q_3}{\sqrt{2q_3^2-q_4}}\right).
$$

The choice of $t_3$ is linked to multiple competing requirements.  
In particular, the inversion problems associated with an exponential and a cosine factor have different characters.  To obtain a useful estimator, $t_3$ should be large enough that the exponential factor is not too close to $1$, while simultaneously having $\omega t_3$ lying well inside $(0,\pi)$. If $\omega$ is multiple orders of magnitude larger than $\gamma_2$, both cannot be satisfied simultaneously. The ambiguity in $\omega$ can be handled by the local adaptive technique discussed in Section~\ref{S:local}, allowing to focus on a reliable recovery of $\cos(\omega t)$ for a suitable set of times $t$, while $t_3$ may be chosen to prioritize estimation of $\gamma_2$.

However, for the present discussion and the establishment of Theorem~\ref{T:mainthm} we focus on a conservative and non-adaptive strategy that guarantees that $F_0$ is a global diffeomorphism on $\Theta$. Concretely, we pick $t_3$ so that the interval $[\omega_{\min}t_3,\omega_{\max}t_3]$ is centered inside $(0,\pi)$, which leads to the simple choice
$$
t_3=\frac{\pi}{\omega_{\min}+\omega_{\max}}.
$$
This choice simplifies the injectivity discussion, however if the domain $[\omega_{\min},\omega_{\max}]$ allows, it may be better to pick a value such that $[\omega_{\min}t_3,\omega_{\max}t_3]$ is centered inside $(k\pi ,(k+1)\pi)$ for some integer $k$.
As we illustrate in the numerical section, the conservative choice $k=0$ degrades the uncertainty in $\gamma_2$ when $\gamma_2\ll\omega$. In practice, favoring $\gamma_2$ (by picking $t_3$ to optimize sensitivity in $\gamma_2$ as we did for $\gamma_1$) and disambiguing $\omega$ can mitigate this issue (for instance allowing to pick $k\neq 0$).

\begin{remark}
In essence, the inversion of $F_0$ above separates $\gamma_2$ and $\omega$ by introducing two new virtual observables $\left(2q_3^2-q_4,{q_3}/{\sqrt{2q_3^2-q_4}}\right)$, each depending on a different parameter. So it would be possible to add flexibility to the overall method by repeating the same construction with a time $t_4$ in order to obtain new observables $(p_5,p_6)$, and associated $(q_5,q_6)$. Then this allows to tune $t_3$ for the estimation of $\gamma_2$, and $t_4$ for $\omega$. This, however, requires defining $F_0$ not as a diffeomorphism but as an embedding, and then designing a pseudo-inverse, which goes slightly beyond the scope of the present paper.
\end{remark}

\section{Finite-amplitude pulses: bias analysis and bounds}\label{S:bound}
As discussed in Sections~\ref{S:estimation}-\ref{S:quantification}, falling outside of the idealized asymptotic regime of infinite pulses of zero duration introduces a bias in the estimation that we estimate here.

We will need the following notation:
letting
$$
\lambda=\min(\gamma_1,\frac{1}{2}(\gamma_1+4\gamma_2)),
\qquad
\mu = \max
\left(
\gamma_1,
\left(
(\tfrac{\gamma_1}{2}+\gamma_2)^2+\omega^2
\right)^{1/2}
\right),
$$
we set 
$$
C(\theta)=\frac{\tau_2}{2}(\mu+ \gamma_1),
\qquad
C'(\theta,t)=C(\theta)+
\frac{\tau_2}{2} \mu
\big(
2 e^{-\lambda t}-e^{-\gamma_1 t}
\big)>0.
$$
Using these functions, we obtain the following bounds, which effectively proves Proposition~\ref{P:boundInit}.
\begin{proposition}\label{P:bound_effective}
With $F_\eps=(F_\eps^j)_{1\leq j\leq 4}$, we have the bounds:
$$
|F_\eps^1(\theta)-F_0^1(\theta)|=0,
$$
$$
|F_\eps^2(\theta)-F_0^2(\theta)|\leq \eps C(\theta),
$$
$$
|F_\eps^3(\theta)-F_0^3(\theta)|\leq \eps C'(\theta,t_3),
$$
$$
|F_\eps^3(\theta)-F_0^3(\theta)|\leq \eps C'(\theta,2t_3).
$$
\end{proposition}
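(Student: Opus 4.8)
The plan is to reduce everything to a single per-pulse estimate — the deviation of a saturated finite-amplitude pulse from its ideal rotation limit — and then propagate such estimates through the exact affine relaxations $R_t$. The first component needs no work: $F_\eps^1(\theta)=h(R_{t_1}(\vin))$ contains no pulse, so $F_\eps^1\equiv F_0^1$ and $|F_\eps^1-F_0^1|=0$.

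For the per-pulse estimate I fix $v\in\bar B(0,1)$ and compare $P^+_{\eps\tau_2}(v)=\phi(v;1/\eps,\eps\tau_2)$ with $r^+(v):=e^{\kappa\tau_2\Omega_x}v$ (symmetrically for $P^-$). Using $A_{1/\eps}\,\eps\tau_2=\kappa\tau_2\Omega_x+\eps\tau_2A_0$ together with the rescaling of the Duhamel term already performed in Section~\ref{S:ideal},
$$
P^+_{\eps\tau_2}(v)-r^+(v)=\bigl(e^{\kappa\tau_2\Omega_x+\eps\tau_2A_0}-e^{\kappa\tau_2\Omega_x}\bigr)v+\eps\tau_2\int_0^1 e^{(\kappa\tau_2\Omega_x+\eps\tau_2A_0)\sigma}b\,\diff\sigma .
$$
Three facts control the right-hand side: $e^{\kappa\tau_2\Omega_x}$ is an isometry; since $\kappa u\Omega_x$ is skew-symmetric, $\tfrac{\diff}{\diff t}|e^{A_ut}\xi|^2=2\langle A_0 e^{A_ut}\xi,e^{A_ut}\xi\rangle\le -2\lambda|e^{A_ut}\xi|^2$, so $\|e^{A_ut}\|\le e^{-\lambda t}\le 1$ for every $u$ and $t\ge 0$ (hence all exponentials above have norm $\le 1$); and the operator norm of $A_0$ is bounded in terms of $\mu$, while $|b|=\gamma_1$. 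A Duhamel expansion of the first term then has norm $\le\eps\tau_2\mu|v|$, the integral is $\le\eps\tau_2\gamma_1$, and since $h$ is $\tfrac12$-Lipschitz in $z$ we get $|h(P^\pm_{\eps\tau_2}(v))-h(r^\pm(v))|\le\tfrac{\eps\tau_2}{2}(\mu|v|+\gamma_1)$. Taking $v=\vin$ (so $|v|=1$) yields the bound on $|F_\eps^2-F_0^2|$ with $C(\theta)=\tfrac{\tau_2}{2}(\mu+\gamma_1)$.

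For $F_\eps^3$ — and $F_\eps^4$ identically with $t_3$ replaced by $2t_3$ — I telescope, writing $w=R_t\circ P^+_{\eps\tau_2}(\vin)$ and $\bar w=R_t\circ r^+(\vin)$ for the true and ideal intermediate states:
$$
P^-_{\eps\tau_2}\!\circ R_t\circ P^+_{\eps\tau_2}(\vin)-r^-\!\circ R_t\circ r^+(\vin)=\bigl(P^-_{\eps\tau_2}(w)-r^-(w)\bigr)+\bigl(r^-(w)-r^-(\bar w)\bigr).
$$
Since $R_t$ is exact and affine with linear part $e^{A_0 t}$ and fixed point $N=(0,0,1)$, i.e.\ $R_t(v)=N+e^{A_0 t}(v-N)$: the second group equals $r^-e^{A_0 t}(P^+_{\eps\tau_2}(\vin)-r^+(\vin))$, which is bounded via the per-pulse estimate together with $\|e^{A_0 t}\|=e^{-\lambda t}$ (read off the explicit $e^{A_0 t}$ from the proof of Lemma~\ref{L:diffeo}); and $|w-N|\le\|e^{A_0 t}\|\,|P^+_{\eps\tau_2}(\vin)-N|\le 2e^{-\lambda t}$ because $P^+_{\eps\tau_2}(\vin)\in\bar B(0,1)$. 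Writing $w=N+(w-N)$ and using affineness of $P^-_{\eps\tau_2}$, the first group splits into the pulse error at the fixed point $N$ (again controlled by the per-pulse estimate, with the $z$-block $e^{-\gamma_1 t}$ of $e^{A_0 t}$ entering here) plus $(e^{A_{-u}\eps\tau_2}-r^-)(w-N)$, of norm $\le 2\eps\tau_2\mu e^{-\lambda t}$. Collecting the pieces and dividing by $2$ for $h$ produces $\eps C'(\theta,t)$.

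The main obstacle is this last accounting. The crude vector-norm bounds already give the correct order in $\eps$, but to land exactly on $C'(\theta,t)$ — in particular on the combination $2e^{-\lambda t}-e^{-\gamma_1 t}$ with its negative second term — one must exploit that $R_t$ contracts by exactly $e^{-\lambda t}$ rather than merely $\le 1$, the north-pole-centered form $R_t(v)=N+e^{A_0 t}(v-N)$, and the fact that $h$ reads off only the $z$-coordinate, so one should project the various error vectors onto the $z$-axis before taking norms; this projection is what yields the cancellation recorded by $-e^{-\gamma_1 t}$. One also checks that composing two affine $O(\eps)$-perturbations causes no $O(\eps^2)$ loss, since the telescoping is carried out at the genuine (possibly $\eps$-dependent) intermediate state $w$ rather than by expanding the composition.
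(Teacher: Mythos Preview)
Your overall strategy --- a per-pulse estimate propagated through the affine relaxation --- is exactly the paper's, and your treatment of $F_\eps^1$ and $F_\eps^2$ matches the paper's proof.

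For $F_\eps^3$ and $F_\eps^4$, however, the accounting you actually carry out does not produce $C'(\theta,t)$, and your proposed remedy (projecting onto the $z$-axis before taking norms) is not what generates the negative term $-\mu e^{-\gamma_1 t}$. Collecting the three pieces you list --- second group $\le\eps\tau_2(\mu+\gamma_1)e^{-\lambda t}$, pulse error at $N$ $\le\eps\tau_2(\mu+\gamma_1)$, and $(M_\eps^- - M_0^-)(w-N)$ $\le 2\eps\tau_2\mu e^{-\lambda t}$ --- and dividing by $2$ gives
$\tfrac{\eps\tau_2}{2}\bigl(\mu+\gamma_1+3\mu e^{-\lambda t}+\gamma_1 e^{-\lambda t}\bigr)$,
a valid $O(\eps)$ bound but strictly larger than $C'(\theta,t)$; no $-\mu e^{-\gamma_1 t}$ appears. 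Note also that the per-pulse error at $N$ is just $\le\eps\tau_2(\mu|N|+\gamma_1)=\eps\tau_2(\mu+\gamma_1)$ and carries no $t$-dependence, so your parenthetical ``with the $z$-block $e^{-\gamma_1 t}$ of $e^{A_0 t}$ entering here'' has no content.

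The paper avoids this loss by \emph{not} centring at $N$. It expands the full composition and uses the splitting $R_t(v)=e^{A_0 t}v+W_t$ with $W_t=(0,0,1-e^{-\gamma_1 t})$, hence $|W_t|=1-e^{-\gamma_1 t}$ exactly. The resulting five-term difference contains the summand $(M_\eps^- - M_0^-)W_t$, bounded by $\eps\tau_2\mu(1-e^{-\gamma_1 t})$, and this is precisely the origin of the $-\mu e^{-\gamma_1 t}$ in $C'(\theta,t)$: the point is not a $z$-projection but simply that $|W_t|<|N|$ by exactly $e^{-\gamma_1 t}$. Your telescoping is in fact algebraically equivalent to the paper's expansion; had you written $w=e^{A_0 t}P^+_{\eps\tau_2}(\vin)+W_t$ instead of $w=N+(w-N)$ and applied $(M_\eps^- - M_0^-)$ to each summand separately (using $|P^+_{\eps\tau_2}(\vin)|\le 1$ and $|W_t|=1-e^{-\gamma_1 t}$), you would recover the paper's constant directly.
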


\begin{proof}
We propose a strategy based on the mean value theorem. That is, letting $\varepsilon=1/\umax$ be fixed, we consider the dynamics of a pulse under control $u=\pm1/\eta$, duration $\eta\tau_2$ with $\eta\in [0,\eps]$.
Let us introduce some usefull notations for the present section.
For $\eta\geq 0$, let $\Omega^\pm(\eta)= \pm\kappa \Omega_x \tau_2 + \eta A_0 \tau_2$ and let
$$
M_\eta^\pm = e^{\Omega^\pm(\eta)},\qquad V_\eta^\pm = \eta\tau_2\int_0^1 e^{\Omega^\pm(\eta)(1-\sigma)}b\diff \sigma.
$$
Then for any $v\in \R^3$
$$
P^\pm_\eta(v)= M_\eta^\pm v + V_\eta^\pm.
$$
We also denote by $W_t=\int_0^t e^{A_0 (t-s)} b \diff s$, so that $R_t(v)=e^{A_0t}v+W_t$.
From this we determine
$$
\begin{aligned}
P_\eta^{-}\circ R_{t} \circ P_\eta^+(v)
&=
M_\eta^-\left(e^{A_0 t}(M_\eta^+ v +V_\eta^+)+W_t\right)+V_\eta^-
\\
&= 
M_\eta^- e^{A_0 t}M_\eta^+ v +  M_\eta^- e^{A_0 t}V_\eta^+ 
 + M_\eta^- W_t +V_\eta^-.
\end{aligned}
$$
{
Then (with $V_0^\pm=0$)
$$
\begin{aligned}
P_\eta^{-}\circ R_{t} \circ P_\eta^+(v)-P_0^{-}\circ R_{t} \circ P_0^+(v)
=
&(M_\eta^- -M_0^-) e^{A_0 t}M_\eta^+ v 
+
M_0^- e^{A_0 t}(M_\eta^+-M_0^+) v 
\\&+  
% (M_\eta^- - M_0^-)e^{A_0 t}V_\eta^+ 
% +
% M_0^- e^{A_0 t}V_\eta^+ 
M_\eta^- e^{A_0 t}V_\eta^+ 
+
(M_\eta^- -M_0^-) W_t 
+
V_\eta^-.
\end{aligned}
$$
We now bound each element in this sum independently by using the mean value theorem.
For all $s\in \R$, we have the formula (see, e.g., \cite{NAJFELD1995321})
$$
\frac{\partial M_\eta^\pm}{\partial \eta}
=
\tau_2
\int_0^1
e^{\Omega^\pm(\eta)(1-\tau)}
A_0
e^{\Omega^\pm(\eta)\tau}\diff \tau.
$$
Notice that $\Omega(\eta)$ is contractive in Euclidean norm:
if $\dot w  = \Omega(\eta) w$, then
$$
\frac{\diff |w|^2}{\diff t}
=
w^\top (\Omega(\eta)+\Omega(\eta)^\top) w
\leq 
-\eta \min(2\gamma_1,\gamma_1+4\gamma_2)|w|^2.
$$
Thus, in operator 2-norm, 
$$
\|M_\eta^\pm\|\leq 1, \quad 
\left\|
\frac{\partial M_\eta^\pm}{\partial \eta}
\right\|
\leq 
\tau_2\|A_0\|.
$$
Hence, in particular, with $\mu = \|A_0\|$,
$\|M_\eta^\pm-M_0^\pm\|\leq \eta \tau_2 \mu$.
On the other hand, the contraction property yields $|V^\pm_\eps|\leq \eta \tau_2 |b|=\eta \tau_2 \gamma_1$. In particular, we immediately get
$$
|P_\eta^+(\vin)-P_0^+(\vin)|\leq \eta \tau_2(\mu+\gamma_1)=2\eta C(\theta).
$$
Now we have 
$$
\|e^{A_0t}\|=e^{-\lambda t},
\quad 
|W_t|= 1-e^{-\gamma_1 t}.
$$
so that 
\begin{multline*}
\big|P_\eta^{-}\circ R_{t} \circ P_\eta^+(\vin)-P_0^{-}\circ R_{t} \circ P_0^+(\vin)\big|
\leq
\\
\eta \tau_2\left(2 \mu e^{-\lambda t}+\gamma_1 e^{-\lambda t}+\mu(1-e^{-\gamma_1 t})+\gamma_1\right)
%\\
=2\eta C'(\theta,t)
\end{multline*}
In conclusion, using the fact that $\Jac h=(0,0,1/2)$, we obtain the statement at $\eta=\eps$.

}

\end{proof}

As explained in Section~\ref{S:quantification}, this implies that $F_\eps$ is a diffeomorphism when $\eps$ is small enough. In order to prove Theorem~\ref{T:mainthm}, what remains to be proved is the existence (and description) of the box $\Delta$.

Differentiating the identity $F_\eps\circ F_\eps^{-1}(p)=p$
with respect to $\varepsilon$ yields (with $F_\eps(\theta) = p$)
\begin{equation*}
\label{eq:diffeq-inverse}
\frac{\partial F_\varepsilon^{-1}}{\partial \eps}(p)
= -\left(\Jac F_\varepsilon(\theta)\right)^{-1}
\frac{\partial F_\varepsilon(\theta)}{\partial \eps} .
\end{equation*}

Letting $H_\varepsilon=(F_\eps(\theta)-F_0(\theta))/\eps$
so that
$$
\Jac F_\varepsilon(\theta)^{-1}
=
\left(\id +\eps\Jac H_\varepsilon(\theta)\Jac F_0(\theta)^{-1}\right)^{-1}\Jac F_0(\theta)^{-1}.
$$
Under the condition 
$$
\eps
\sup_{\theta\in\Theta}
\left\| \Jac H_\varepsilon(\theta)\Jac F_0(\theta)^{-1}\right\|
< 1
$$
we obtain the uniform bound
\begin{equation*}
\label{eq:inverse-bound}
\sup_{\theta\in\Theta}
\left|(\Jac F_\eps(\theta))^{-1}\right|
\leq
\frac{\left\|\Jac F_0(\theta)^{-1}\right\|}{1-\eps \|\Jac H_\eps(\theta)\Jac F_0(\theta)^{-1}\|}
\end{equation*}
This proves the existence of $G_\eps$ as defined in \eqref{E:defG} and implies invertibility of $F_\eps$, as stated in Section~\ref{S:quantification}. 

The above fact alone proves the existence of a $\Delta$ box, but we can compute one such box as follows. We use the approximation
$$
\frac{\partial F_\varepsilon^{-1}}{\partial \eps}(p)
= -\left(\Jac F_0(\theta)\right)^{-1}
\left.\frac{\partial F_\varepsilon(\theta)}{\partial \eps}\right|_{\eps=0} +O(\eps),
$$
in conjunction with Proposition~\ref{P:bound_effective}. Let 
$$
\Delta'(\theta)=
\{0\}\times [-C(\theta),C(\theta)]\times 
[-C'(\theta,t_3),C'(\theta,t_3)]
\times [-C'(\theta,2t_3),C'(\theta,2t_3)].
$$
and let $\pi_i:\R^4\to \R$ be the projection on the $i$-th coordinate, $1\leq i\leq 4$. We then set for $1\leq i\leq 4$ 
$$
\Delta_i=\sup_{\theta\in \Theta }
\pi_i\left[\left(\Jac F_0(\theta)\right)^{-1}
\cdot \Delta'(\theta)\right].
$$
Such a set $\Delta$ then matches the definition and Theorem~\ref{T:mainthm} follows from the analysis in Section~\ref{S:quantification}.

\section{Adaptive non-global inversion}
\label{S:local}

In Section~\ref{S:ideal} we established that, for a suitable choice of pulse and relaxation times, the ideal map $F_0$ (and therefore $F_\varepsilon$ for $\varepsilon$ sufficiently small) is a  diffeomorphism. These times are subject to strong constraints, so it may be preferable to adopt a more adaptive strategy that relies on local rather than global inversion.
Indeed, for generic choices of durations, both $F_0$ and $F_\varepsilon$ are local diffeomorphisms on an open and dense subset of $\Theta$. Loss of invertibility only occurs at the critical configurations where 
$$
\cos(\kappa \tau_2)=\pm 1, \qquad \cos(\omega t_3)=\pm 1.
$$
For generic pairs of durations $(\tau_2,\tau_2')$ and $(t_3,t_3')$, at every point of $\Theta$ at least one combination yields a locally invertible Jacobian. The drawback, however, is that the corresponding domain of invertibility may be arbitrarily small, making a purely local approach still unreliable.

Nevertheless, for any fixed choice of positive durations, the preimage $F_0^{-1}(\{p\})$ of a point $p\in (0,1)^4$ consists of at most a  finite set of points in $\Theta$. Hence the main practical difficulty is to identify the correct preimage among finitely many admissible candidates.

The source of this ambiguity lies in the contributions of $\kappa$ and $\omega$, whose identification reduces to solving an equation of the form
$$
\cos(\zeta t)=f(p(\zeta,t)),
$$
where $\zeta$ denotes either parameter. For a given measurement at time $t$, let $\{\zeta_i\}$ denote the finite set of admissible solutions (in restriction to a prescribed compact interval). That is, $\cos(\zeta_i t)=f(p(\zeta,t))$ for elements $\zeta_i$.
To isolate the true value, we can perform a second set of measurements at a different time~$t'$. 
If $t'$ is chosen to be rationally independent of $t$ (i.e. $t/t'\neq \mathbb{Q}$), then $\cos \zeta_i t'\neq \cos \zeta_j t'$ if $i\neq j$,
so the second experiment distinguishes the true parameter value from all other candidates.

In practice, the probabilities $p_i$ are only estimated and the pulses are not ideal. The practical implementation is therefore as follows. Fix an admissible tolerance $\varepsilon_0>0$. Perform two identification procedures with the same relaxation time $t_1$ but using two different pairs of pulse durations, $(\tau_2,t_3)$ and $(\tau_2',t_3')$, where $\tau_2'$ and $t_3'$ are randomly chosen. For each procedure we compute the corresponding sets of admissible parameter estimates, and we retain all pairs of candidates whose images under $F_0$ lie within $\varepsilon_0$ of the experimentally measured probabilities. 
If several candidates remain, we repeat the procedure with an additional randomly chosen pair $(\tau_2'',t_3'')$, and so on.

This approach can be also be used in conjunction with the global inversion approach by reducing the size  of the a priori set $\Theta$. However if the a priori parameter bounds are large, the above filtering may be slow to converge.

\section{Numerical experiments}
\label{S:simulations}

In this section, we illustrate the performance of the proposed estimation protocol through numerical simulations. The dynamics of the open qubit (Eq.~\ref{E:dynamics}) were integrated using the Julia library \texttt{DifferentialEquations.jl}, utilizing a high-order {\texttt{Tsit5}} integrator to ensure that numerical errors are negligible compared to statistical fluctuations.

To illustrate the estimator, we adopt a  parameter set adapted from~\cite{morzhin2019minimal}. While physical implementations (e.g., superconducting qubits) exhibit frequency-to-dephasing ratios of order $10^6$, simulating such dynamics hides the algorithmic properties. We therefore utilize a academic numerical values that preserve the essential physical hierarchy $\omega \gg \kappa \gg \gamma$ while ensuring numerical stability. Precise values for $\theta, \Theta$, experimental times, as well as averaged  estimator performance (RMSE), are summarized in Table~\ref{tab:results}. These results are obtained under a strong control regime  $u_{\max} = 10^5$ and large sample size $n = 5 \cdot 10^8$.

\begin{table}[ht]

\caption{Simulation parameters and Root Mean Square Error calculated over 100 trials.}

\centering
\label{tab:results}
    \begin{tabular}{l c c c c} 
    \hline
    \textbf{Param} & \textbf{True Value} $\theta^*$ & \textbf{Bounds} $[\underline{\theta}, \bar{\theta}]$ & \textbf{Times} & \textbf{RMSE} ($10^{-4}$) \\
    \hline
    $\omega$   & $2$ & $[1, 4]$ & $t_3 = 0.62$ & \textbf{3.58} \\
    $\kappa$   & $0.015$ & $[0.01, 0.04]$ & $\tau_2 = 62.83$ &\textbf{2.86} \\
    $\gamma_1$ & $0.002$ & $[0.001, 0.003]$ & $t_1 = 530.0$ & \textbf{0.015} \\
    $\gamma_2$ & $0.003$ & $[0.002, 0.005]$ & $t_3 = 0.62$ & \textbf{1.12} \\
    \hline
    \end{tabular}%
\end{table}

Next, we illustrate the asymptotic behavior of the estimator $\hat{\theta}$ as a function of the measurement budget $n$. Theorem~\ref{T:mainthm} guarantees $1/\sqrt{n}$ convergence for infinite pulses, which becomes only practical under finite pulse. Proposition 4.1 predicts a systematic bias of order $O(1/\umax)$. To illustrate this, we fixed $\umax$ and varied the sample size $n$, see Figure~\ref{F:curve}. We can observe that the order of convergence of the estimator tapers off as $n$ get large.
Finally, we further illustrate Theorem~\ref{T:mainthm} by tracing the (first order) confidence regions $\mathrm{CR}_{1-\alpha}$, $\alpha=0.01$, for two different values of $u_{max}$, and $n=10^{9}$. See Figure~\ref{F:regions}. 

\begin{figure}[ht]
    \centering
    \includegraphics[width=.7\linewidth]{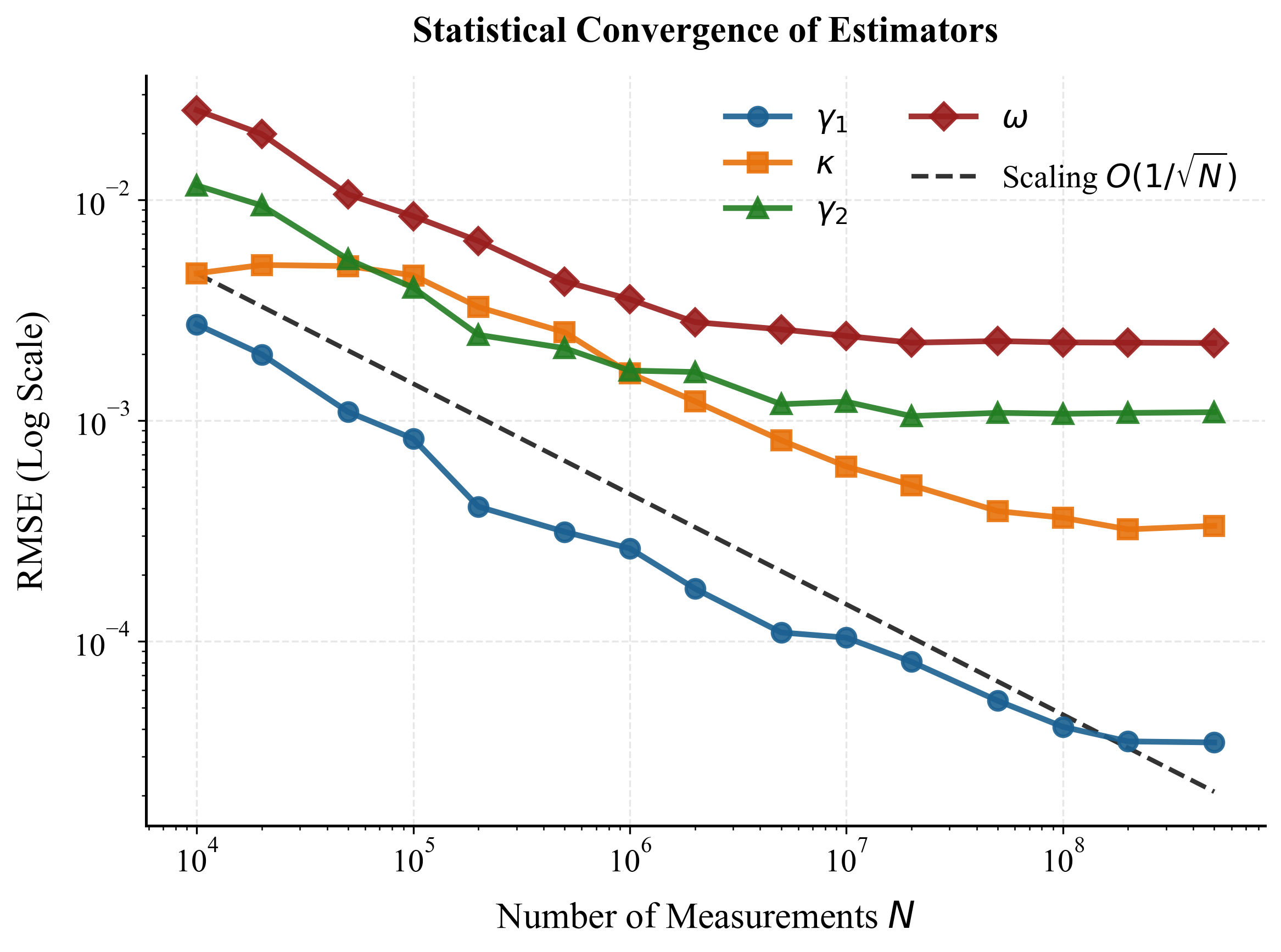}
    \begin{minipage}{.8\linewidth}
        \caption{Convergence of the estimators for $\omega, \kappa, \gamma_1, \gamma_2$ vs. number of measurements $n$.}
    \end{minipage}
    
    \label{F:curve}
\end{figure}

\begin{figure}[ht]
    \centering
    \includegraphics[width=.5\linewidth]{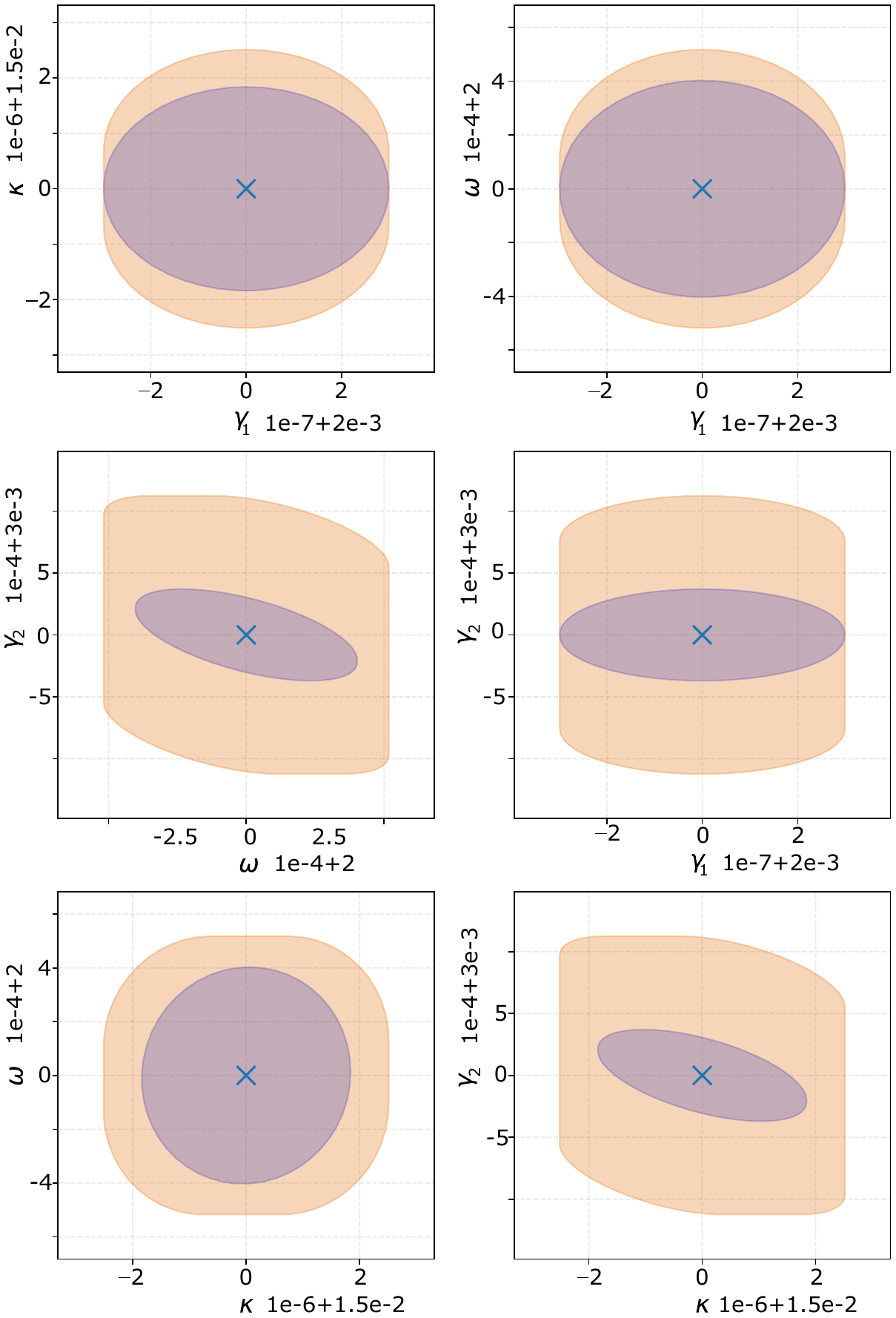}
    
    \begin{minipage}{.8\linewidth}
    \caption{First order confidence regions for $\umax = 10^5$ (orange), $\umax = 10^7$ (purple) and fixed $n=10^9$. Relative center and scale are indicated for each graph.\label{F:regions}}
    \end{minipage}
\end{figure}

\section{Conclusion}
In this work, we introduced a systematic and interpretable methodology for identifying the parameters of an open qubit system subject to relaxation and dephasing. By operating in the idealized regime of infinite-amplitude pulses, we demonstrated that the system parameters can be analytically reconstructed from a minimal set of experimental observables. We further analyzed the impact of finite-amplitude pulses, quantifying their perturbative effect on the estimation protocol and providing a rigorous framework to separate statistical uncertainty from modeling errors. This allowed us to construct a confidence region that explicitly accounts for both sources of uncertainty.
While the current approach prioritizes theoretical guarantees through diffeomorphic mappings, we highlighted several promising avenues for future work. These include relaxing some of the conservative assumptions, such as adopting embedding-based strategies instead of strict diffeomorphisms, and integrating the adaptive disambiguation algorithm more tightly with the uncertainty analysis.

\bibliographystyle{abbrv}
\bibliography{biblio}

\end{document}